\newenvironment{algorithm-hbox}{\hbadness=10000\begin{algorithm}}{\end{algorithm}}
\theoremstyle{plain}
\newtheorem{theorem}{Theorem}
\newtheorem{corollary}[theorem]{Corollary}
\numberwithin{equation}{section}
\newenvironment{enumeratei-continued}{\begin{enumerate}[label=\textup{(\roman*)}, noitemsep, topsep=1.5mm, labelindent=.8em, leftmargin=*, widest=., resume=my]}{\end{enumerate}}
\renewcommand{\epsilon}{\varepsilon}
\renewcommand{\leq}{\leqslant}
\renewcommand{\geq}{\geqslant}
\newcommand{\Nat}{\mathbb{N}}
\newcommand{\evB}{\textbf{B}}
\newcommand{\evP}{\textbf{P}}
\newcommand{\evR}{\textbf{R}}
\begin{document}
\title[A note on random greedy coloring of uniform hypergraphs]{A note on random greedy coloring of uniform hypergraphs}

\author{Danila D. Cherkashin}
\address{Saint-Petersburg State University, Faculty of Mathematics and Mechanics, Saint-Petersburg, Russia}
\email{matelk@mail.ru}

\author{Jakub Kozik}
\address{Theoretical Computer Science Department, Faculty of Mathematics and
Computer Science, Jagiellonian University, Krak\'{o}w, Poland}
\thanks{Research of J.\ Kozik  was supported by Polish National
Science Center within grant 2011/01/D/ST1/04412.}
\email{Jakub.Kozik@uj.edu.pl}

\begin{abstract}
The smallest number of edges forming an $n$-uniform hypergraph
which is not $r$-colorable is denoted by $m(n,r)$. Erd\H{o}s and Lov\'{a}sz conjectured that 
$m(n,2)= \theta\left(n 2^n\right)$.
The best known lower bound $m(n,2) = \Omega\left(\sqrt{n/\log n} 2^n\right)$ was obtained by Radhakrishnan and Srinivasan 
in 2000. We present a simple proof of their result. 
The proof is based on analysis of random greedy coloring algorithm investigated by Pluh\'ar in 2009. 
The proof method extends to the case of $r$-coloring, and we show that for any fixed $r$ we have 
$m(n,r)=  \Omega\left(\left(n/\log n\right)^{(r-1)/r} \; r^n\right)$ 
improving the bound of Kostochka from 2004. We also derive analogous bounds on minimum edge degree of an 
$n$-uniform hypergraph that is not $r$-colorable.

\end{abstract}

\maketitle

\newcommand{\Red}{red}
\newcommand{\Blue}{blue}

\newcommand{\prob}{\mathbb{P}}
\newcommand{\expt}{\mathbb{E}}

A hypergraph is a pair $(V,E)$, where $V$ is a set of vertices and $E$ is a family of subsets of $V$. 
Hypergraph is \emph{$n$-uniform} if all its edges have exactly $n$ elements. Hypergraph $(V,E)$ is 
\emph{$r$ colorable} if there exists a coloring of vertices with $r$ colors in which no edge is monochromatic 
(i.e., there exists a function $c:V \to \{1,\ldots,r\}$ such that the image of every edge has at least two elements). 
Hypergraph has \emph{property B} if it is two-colorable. For $n,r\in \Nat$ let $m(n,r)$ be the smallest number of 
edges of an $n$-uniform hypergraph that is not $r$-colorable. The asymptotic behaviour of $m(n)=m(n,2)$ was first 
studied by Erd{\H{o}}s. In \cite{Er1963} and \cite{Er1964} Erd{\H{o}}s proved that:
\[
	2^{n-1} \leq m(n) \leq (1+o(1)) \frac{e \ln{2}}{4} n^2 2^n.
\]
In \cite{EL1975} Erd{\H{o}}s and Lov\'asz wrote that ``perhaps $n2^n$ is the correct order of magnitude of $m(n)$''. 
The upper bound has not been improved since. The most recent improvement on the lower bound was obtained by 
Radhakrishnan and Srinivasan in \cite{RS00}.
We present a simple proof of their main theorem:
\begin{theorem}[Radhakrishnan, Srinivasan 2000]
\label{thm:RS}
\[
	m(n) = \Omega\left( \left(\frac{n}{\ln(n)}\right)^{1/2}  2^n\right).
\]
\end{theorem}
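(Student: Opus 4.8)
The plan is to analyze the random greedy coloring algorithm of Pluh\'ar applied to the given $n$-uniform hypergraph $H$ with $m$ edges. Assign to every vertex $v$ an independent uniform random variable $x_v\in[0,1]$ and fix a parameter $p\in(0,1)$, which will be taken of order $(\ln n)/n$. Process the vertices in increasing order of $x_v$ and color each one on the fly: color $v$ blue if $x_v\in\iB$, color $v$ red if $x_v\in\iR$, and if $x_v\in\iP$ color $v$ blue \emph{unless} some edge through $v$ already has all of its other vertices colored blue, in which case color $v$ red. It then suffices to show that, when $m$ is below the claimed bound, the probability that this procedure ever creates a monochromatic edge is strictly less than $1$: any outcome with no monochromatic edge is a proper $2$-coloring, so $H$ has property B, and hence $m(n)$ exceeds the bound.

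First I dispose of blue edges. If $e$ were monochromatic blue, consider its last vertex $v$ (the one with the largest $x_v$). When $v$ was colored, all of $e\setminus\{v\}$ was already blue, so the rule colors $v$ red whenever $x_v\in\iP\cup\iR$; hence $x_v\in\iB$, and since $\iB$ is an initial segment of $[0,1]$, in fact all of $e$ lies in $\iB$. Therefore $\prob[\text{some monochromatic blue edge}]\le m\bigl(\tfrac{1-p}{2}\bigr)^{n}$.

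The main step is the analysis of red edges. Suppose $e$ is monochromatic red and let $v$ be its \emph{first} vertex (smallest $x_v$). Then $v$ is red, so $x_v\notin\iB$. If $x_v\in\iR$, then every vertex of $e$ has $x$-value $\ge x_v\ge\tfrac{1+p}{2}$, so $e$ lies entirely in $\iR$, contributing at most $m\bigl(\tfrac{1-p}{2}\bigr)^{n}$ again. Otherwise $x_v\in\iP$ and $v$ was recolored to red, so there is an edge $f\ni v$ whose other $n-1$ vertices were all processed before $v$ --- hence have $x$-values below $x_v$ --- and are all blue. Since $v$ is the first vertex of $e$, every vertex of $e\setminus\{v\}$ has $x$-value above $x_v$, so $e\cap f=\{v\}$; in particular this configuration can only be hosted by pairs of edges meeting in exactly one vertex. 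Now I union bound over the (at most $m^{2}$) ordered pairs $(e,f)$ of distinct edges with $e\cap f$ a single vertex $v$: the configuration forces the $n-1$ vertices of $e\setminus\{v\}$ to have $x$-values above $x_v$ and the $n-1$ vertices of $f\setminus\{v\}$ to have $x$-values below $x_v$, which conditionally on $x_v=t$ has probability $t^{n-1}(1-t)^{n-1}$; integrating over $t\in\iP$ and using $t(1-t)\le\tfrac14$ on that interval,
\[
\int_{\frac{1-p}{2}}^{\frac{1+p}{2}} t^{\,n-1}(1-t)^{\,n-1}\,dt\;\le\;p\cdot 4^{-(n-1)}.
\]
Hence $\prob[\text{some monochromatic red edge}]\le m\bigl(\tfrac{1-p}{2}\bigr)^{n}+4p\,m^{2}4^{-n}$.

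Putting the pieces together, $\prob[\text{some monochromatic edge}]\le 2m\bigl(\tfrac{1-p}{2}\bigr)^{n}+4p\,m^{2}4^{-n}\le 2m\,2^{-n}e^{-pn}+4p\,m^{2}4^{-n}$. Choosing $p$ of order $(\ln n)/n$ makes the first term $o(1)$ as long as $m=O\bigl(2^{n}\sqrt{n/\ln n}\bigr)$, while the second term is less than $1$ once $m\le c\,2^{n}\sqrt{n/\ln n}$ for a suitable constant $c$; for such $m$ the whole probability is below $1$, so $H$ is $2$-colorable, and $m(n)=\Omega\bigl((n/\ln n)^{1/2}2^{n}\bigr)$ follows. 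I expect the one genuinely delicate point to be the structural claim about red edges, especially the observation that the witnessing edge $f$ meets $e$ in exactly one vertex: this is what keeps the union bound at $\sim m^{2}$ pairs rather than $\sim nm^{2}$, and is precisely why the exponent $1/2$ (with the $\ln n$ correction) comes out rather than a weaker power of $n$. Getting the two error terms to balance also dictates the choice $p\asymp(\ln n)/n$, and one must set up the middle-zone rule exactly so that both the blue-edge and the red-edge arguments go through.
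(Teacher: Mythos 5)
Your proposal is correct and follows essentially the same route as the paper: a greedy colouring driven by uniform birth times, the three-interval split with $p\asymp \ln(n)/n$, the observation that bad configurations are carried by pairs of edges sharing exactly one vertex, and the first-moment bounds $m\bigl(\tfrac{1-p}{2}\bigr)^n$ for the outer intervals and $m^2\int t^{n-1}(1-t)^{n-1}\,dt\le p\,4^{-(n-1)}m^2$ for the middle one. The only difference is cosmetic: you hard-code the colours on the outer intervals and run the greedy rule only on the middle one (the variant the paper itself mentions in its remarks as equivalent, in the spirit of Radhakrishnan--Srinivasan/Boppana), whereas the paper runs the pure greedy algorithm throughout and phrases the bad events in terms of conflicting pairs; the resulting estimates and the choice of $p$ are the same.
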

In fact we prove that, for $c< \sqrt{2}$ and  all sufficiently large $n$, whenever an $n$-uniform hypergraph has 
at most $ c \sqrt{n/\ln(n)} 2^{n-1}$ edges, then a simple random greedy algorithm produces a proper coloring 
with positive probability. The same coloring procedure was considered by Pluh\'ar in \cite{Pluhar09}, 
where a bound $m(n)=\Omega\left(n^{1/4}2^n\right)$ was obtained in an elegant and straightforward way.

The proof technique extends easily to the more general case of $r$-coloring 
(very much along the lines of development of Pluh\'ar \cite{Pluhar09}). 
To avoid technicalities we focus on asymptotics of $m(n,r)$ for fixed $r$ and $n$ tending to infinity.
\begin{theorem}
\label{thm:rCol}
For any fixed integer $r\geq 2$, we have
\[
	m(n,r)= \Omega\left( \left(\frac{n}{\ln(n)}\right)^{\frac{r-1}{r}} r^n\right)
\]
\end{theorem}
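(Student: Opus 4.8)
The plan is to run and analyse the $r$-colour version of the random greedy colouring procedure underlying Theorem~\ref{thm:RS} (the natural generalisation of Pluh\'ar's algorithm). Fix a parameter $p=p(n)\in(0,1)$, and give every vertex $v$ an independent uniform $x_v\in[0,1)$. Partition $[0,1)$ into $r$ ``safe'' parts of total measure $1-p$, one per colour, together with a ``pending'' part of measure $p$: a vertex landing in a safe part takes the corresponding colour and is never changed, while the pending vertices are examined one at a time in increasing order of $x_v$, each receiving the least colour whose use would not at that moment complete an already monochromatic edge. The algorithm fails exactly when the output still has a monochromatic edge, and since by the probabilistic method it suffices to make the failure probability $<1$, the whole theorem reduces to a union bound over ``witnesses of failure'' for a well chosen $p$.

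First I would describe these witnesses, generalising the events $\evB$, $\evP$, $\evR$ from the $r=2$ case. If an edge $e$ is monochromatic of colour $c$ in the output, then every vertex of $e$ that landed in a safe part landed in the colour-$c$ one, and every pending vertex $v\in e$ was pushed all the way up to colour $c$, which means that for each colour $c'<c$ there is an edge $f\ni v$ all of whose other vertices were already coloured $c'$ and processed before $v$; in particular all of $f\setminus\{v\}$ lies below $x_v$, so $v=\max f$ in the $x$-order. Unwinding these forcing edges yields a witness of one of two shapes: (a) an edge all of whose vertices lie in a single safe part; or (b) an edge $e$ together with an attached family of forcing edges (at most $r-1$ per pending vertex of $e$), carrying the colour and maximality constraints just described.

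Next I would bound each type by the union bound. Type~(a) contributes at most $m\,r\,\big(\tfrac{1-p}{r}\big)^n=m\,r^{1-n}(1-p)^n$, which, since $m\,r^{1-n}=O\big((n/\ln n)^{(r-1)/r}\big)$ and $(1-p)^n\le e^{-pn}$, is $o(1)$ as long as $pn/\ln n\to\infty$. Type~(b) is the heart of the matter. The naive bound that pays a factor $m$ for each forcing edge is far too lossy --- it would not even reach $r^n$ --- so, as for $r=2$ in the work of Radhakrishnan and Srinivasan, one must make the maximality constraints work: each forcing edge has its shared vertex as its maximum, and the processing positions of the vertices in the witness are rigidly nested, so the contribution of a forcing edge is governed by an integral of the form $\int x^{\,n-1}\,dx$ over a short interval rather than by a free factor of $m$. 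Carrying out this accounting --- summing over which vertices of $e$ are pending, over the colour $c\le r$, and over $e$ --- should yield a bound of the shape $\mathbb{P}[\text{type (b)}]\lesssim m\,r^{1-n}\big((1-p)(1+D)\big)^n$, where $D$ is assembled from the $\le r-1$ forcing contributions and the pending fraction~$p$.

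Finally I would optimise $p$. With $m=c\,(n/\ln n)^{(r-1)/r}r^n$ one has $(r^{n-1}/m)^{1/n}=1-\tfrac{r-1}{r}\cdot\tfrac{\ln n}{n}(1+o(1))$, so type~(b) is $o(1)$ once $(1-p)(1+D)<1-\tfrac{r-1}{r}\cdot\tfrac{\ln n}{n}(1+o(1))$, essentially $D<p-\tfrac{r-1}{r}\cdot\tfrac{\ln n}{n}$. This exposes the trade-off: larger $p$ shrinks the safe parts (good for type~(a), which needs $pn/\ln n\to\infty$) but enlarges the pending set and~$D$; balancing the two pins down $p\asymp(\ln n/n)^{1/r}$, which is comfortably above $\tfrac{r-1}{r}\cdot\tfrac{\ln n}{n}$, and for a small enough implied constant in $m$ makes $D=o(p)$. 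Then $\mathbb{P}[\text{failure}]<1$, a proper $r$-colouring exists, and $m(n,r)=\Omega\big((n/\ln n)^{(r-1)/r}r^n\big)$. I expect the genuine obstacle to be precisely that type~(b) estimate: squeezing enough out of the ordering constraints that each forcing edge costs substantially less than $m\,r^{-n}$, so that everything closes at the optimal $p\asymp(\ln n/n)^{1/r}$ and the exponent $(r-1)/r$ falls out --- with the extra bookkeeping of the $r-1$ nested colour constraints being where the argument goes beyond Pluh\'ar's.
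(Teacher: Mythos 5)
There is a genuine gap, and it sits exactly where you park it: the ``type (b)'' estimate is not an omitted calculation but the missing idea, and the witness structure you propose does not support it. First, a correctness issue: in your precoloring variant, a vertex of a forcing edge that lands in the colour-$c'$ safe part is coloured from the start regardless of where it lies, so your claim that all of $f\setminus\{v\}$ lies below $x_v$ (hence $v=\max f$) is false for your own algorithm; the ``maximality'' you want to exploit is simply not there for safe vertices. Second, and more seriously, even granting some ordering constraint, your witness attaches up to $r-1$ forcing edges to \emph{every} pending vertex of the monochromatic edge, and in the union bound each forcing edge costs a factor of $m\approx k\,r^{n-2}$ while its $n-1$ other vertices are only constrained to be coloured $c'$, i.e.\ to lie in a region of measure about $\frac{1-p}{r}+p$. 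That gives a per-forcing-edge factor of order $\frac{k}{r}\bigl(1+(r-1)p\bigr)^{n-1}\approx \frac{k}{r}e^{(r-1)pn}$, which already exceeds $1$ for $p=\Theta(\ln n/n)$ and is astronomically large for your proposed $p\asymp(\ln n/n)^{1/r}$; the integral-over-a-short-interval saving you invoke does not materialize because nothing confines those vertices to a short interval. So the bound of the shape $m\,r^{1-n}\bigl((1-p)(1+D)\bigr)^n$ with $D=o(p)$ is not something a routine accounting will produce from this witness; no proof of the theorem along these exact lines is known to close.

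The paper's proof replaces your ``edge plus a broom of forcing edges'' by a much leaner witness: a \emph{conflicting $r$-chain} $(f_1,\dots,f_r)$, where consecutive edges meet in one vertex and the last vertex of $f_i$ is the first vertex of $f_{i+1}$. Every monochromatic edge (necessarily of colour $r$ for the pure greedy algorithm, which is used with no precoloring) is the last edge of such a chain: one only needs \emph{one} forcing edge per level, obtained by following first vertices downward through the colours. The $\ln n$ gain then comes from a separate device: with $p=\frac{2\ln n}{n}$, call an edge \emph{short} if its birth times fit in an interval of length $<\frac{1-p}{r}$; the expected number of short edges is $k\,r^{n-2}\,n\bigl(\frac{1-p}{r}\bigr)^{n-1}=o(1)$, and in a conflicting chain with no short edge each linking vertex is forced into a window of length $p$ (its birth time must be within $p$ of $i/r$), while each edge must fit into a window of length about $1/r$. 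Hence a fixed chain is conflicting with probability at most $p^{r-1}r^{-r(n-2)}$, and the expected number of such chains is at most $\frac{2}{r!}(k\,r^{n-2})^{r}p^{r-1}r^{-r(n-2)}=O\bigl(k^{r}(\ln n/n)^{r-1}\bigr)$, which is where the exponent $\frac{r-1}{r}$ and the threshold $k\approx(n/\ln n)^{(r-1)/r}$ come from. Note also that the correct scale of the auxiliary parameter is $p=\Theta(\ln n/n)$ for every $r$, not $(\ln n/n)^{1/r}$: the exponent $(r-1)/r$ is produced by the $r-1$ linking windows of a chain, not by widening a pending region. Your high-level framing (greedy colouring, union bound over failure witnesses, optimize $p$) matches the paper, but without the chain decomposition and the short-edge conditioning the central estimate you defer cannot be carried out.
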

This improves the bounds of Kostochka \cite{Kos2004} which are of the order 
$(\frac{n}{\ln(n)})^{\frac{\lfloor \log_2(r) \rfloor}{\lfloor \log_2(r) \rfloor+1}} r^n$ and the bound $m(n,3)= \Omega(n^{1/2} 3^{n-1})$ by Shabanov \cite{Sha2012}.
Several other variants of  extremal problems on hypergraph coloring can be found in a survey by Raigorodskii and Shabanov \cite{Rai1}.

Just like the results from \cite{RS00} our results extend to a local version. 
Let $D(n,r)$ be the maximum number such that  every $n$-uniform hypergraph 
with strictly smaller edge degrees is $r$-colorable.

\begin{theorem}  For any fixed $r\geq 2$ we have
\label{thm:loc}
\[
	D(n,r) = \Omega\left( \left(\frac{n}{\ln(n)}\right)^{\frac{r-1}{r}} r^n\right).
\]
\end{theorem}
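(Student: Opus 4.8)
\medskip
The plan is to run the random greedy $r$-colouring from the proof of Theorem~\ref{thm:rCol} and to replace the first-moment estimate there by the symmetric Lov\'asz Local Lemma. Recall the algorithm: assign i.i.d.\ uniform values $x_v\in[0,1]$ to the vertices, read off the initial colouring from the blocks cut out by the threshold parameter $p$, and run the recolouring procedure (which in the $r$-colour case consists of $r-1$ rounds). What makes a local version possible is that the proof of Theorem~\ref{thm:rCol} is, at bottom, a union bound over \emph{bounded witnesses}: if the output has a monochromatic edge, then there is a chain of edges $\mathbf f=(f_1,\dots,f_r)$ with $f_i\cap f_{i-1}\ne\emptyset$, together with a prescribed pattern of the values $\{x_v : v\in f_1\cup\dots\cup f_r\}$; writing $q$ for the probability that a given such chain is realised, the proof amounts to showing $q\cdot(\text{number of chains})<1$ throughout $m=O\!\big((n/\ln n)^{(r-1)/r}r^n\big)$, the number of chains being at most $m^{\,r}$. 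Equivalently, $q^{-1/r}=\Omega\!\big((n/\ln n)^{(r-1)/r}r^n\big)$.

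First I would go back to that proof and record two properties of a witness chain $\mathbf f$: \textup{(a)} the event $B_{\mathbf f}=\{\mathbf f\text{ is realised}\}$ is measurable with respect to $\{x_v : v\in f_1\cup\dots\cup f_r\}$ alone --- here it is essential that the recolouring stops after boundedly many rounds, so that the influence of a single vertex never reaches beyond a chain of bounded length; and \textup{(b)} $\Pr[B_{\mathbf f}]\le q$. Then I would apply the symmetric Local Lemma to the family $\{B_{\mathbf f}\}$ indexed by all potential witness chains. By~\textup{(a)}, $B_{\mathbf f}$ is mutually independent of every $B_{\mathbf g}$ with $\mathbf g$ vertex-disjoint from $\mathbf f$. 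If every edge of the hypergraph meets fewer than $D$ others, then a chain $\mathbf g$ sharing a vertex with $\mathbf f$ is obtained by choosing a meeting pair $(f_i,g_j)$ --- at most $r^2$ index choices and at most $D$ edges $g_j$ meeting a given $f_i$ --- and then completing $\mathbf g$ in at most $D^{\,r-1}$ further ways, so $B_{\mathbf f}$ depends on fewer than $r^2D^{\,r}$ of the other events. Hence, whenever $e\,q\,(r^2D^{\,r}+1)\le1$, we have $\Pr[\bigwedge_{\mathbf f}\overline{B_{\mathbf f}}]>0$, so by the witness property some colouring is proper; that is, every $n$-uniform hypergraph with all edge degrees $<D$ is $r$-colourable. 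The largest $D$ satisfying the inequality is $\Omega\!\big((er^2q)^{-1/r}\big)=\Omega\!\big((n/\ln n)^{(r-1)/r}r^n\big)$ for the same $p$ as before, so $D(n,r)=\Omega\!\big((n/\ln n)^{(r-1)/r}r^n\big)$, which is Theorem~\ref{thm:loc}. (For $r=2$ this is the local form of Theorem~\ref{thm:RS}: chains of two edges, dependency degree $O(D^2)$, and $q^{-1/2}=\Omega(\sqrt{n/\ln n}\,2^n)$.)

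I expect the main obstacle to be item~\textup{(b)} together with the book-keeping behind~\textup{(a)}: one must check that the argument for Theorem~\ref{thm:rCol} genuinely decomposes as a sum over chains of at most $r$ edges with a single per-chain bound $q$ --- in particular that the $(\ln n/n)$-type factor is attached to each individual chain rather than being an artefact of summing over a larger family --- and that the witness of a monochromatic edge always lies on such a chain. Everything after that is routine: the dependency degree is $\Theta(D^{\,r})$, the Local Lemma inequality extracts an $r$-th root, and these two effects cancel, reproducing exactly the exponent $(r-1)/r$ and the base $r$ of the global bound up to a constant depending on $r$. (Note in passing that the local statement recovers Theorem~\ref{thm:rCol} up to a constant, since a hypergraph with $m$ edges has all edge degrees $<m$; so in principle the analysis need only be carried out in the local setting.)
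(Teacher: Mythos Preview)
Your overall plan---Local Lemma on the witness events from the proof of Theorem~\ref{thm:rCol}---is the paper's approach, and your dependency count $r^2D^r$ for chains matches. But the worry you raise in your last paragraph is decisive, and it is a genuine gap: the $(\ln n/n)^{r-1}$ factor does \emph{not} attach to each individual conflicting chain. The unconditional probability that a fixed $r$-chain is conflicting is a Dirichlet-type integral of order $n^{-(r-1)/2}r^{-r(n-2)}$ (for $r=2$ it is $B(n,n)\sim c\,n^{-1/2}4^{-n}$), so the symmetric Local Lemma applied to chain events alone yields only $D(n,r)=\Omega\!\left(n^{(r-1)/(2r)}r^n\right)$, the Pluh\'ar-type bound. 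In the proof of Theorem~\ref{thm:rCol} the stronger per-chain estimate $p^{r-1}r^{-r(n-2)}$ (with $p=2\ln n/n$) was obtained only for chains containing no \emph{short} edge; the short edges were covered by a separate first-moment term. There is no single $q$ of the size you need.

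The paper's fix is to carry that two-part decomposition through the Local Lemma. It applies the \emph{general} (asymmetric) Lov\'asz Local Lemma to two families of events, $\mathcal{S}_f=\{f\text{ is short}\}$ for each edge $f$ and $\mathcal{C}_s=\{s\text{ is a conflicting chain}\}$ for each $r$-chain $s$, with separate weights $x,y$. The dependency counts are: an edge meets at most $D$ edges and $rD^r$ chains, a chain meets at most $rD$ edges and $r^2D^r$ chains. Choosing $x=1-e^{-a/D}$ and $y=1-e^{-b/(rD^r)}$ reduces the two LLL conditions to $P_1\lesssim x\,e^{-(a+b)}$ and $P_2\lesssim y\,e^{-r(a+b)}$; the second, with the short-edge-free bound $P_2\le p^{r-1}r^{-r(n-2)}$, forces $D^r\lesssim P_2^{-1}$ and hence $D=\Omega\!\left((n/\ln n)^{(r-1)/r}r^n\right)$, while the short-edge inequality is slack. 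So your proposal needs exactly one amendment: add the single-edge ``short'' events as a second family and use the asymmetric lemma. With chain events alone and the symmetric lemma you fall short by a factor of roughly $n^{(r-1)/(2r)}$.
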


All the results of the paper are derived from analysis of random greedy $r$-coloring procedure listed as 
Algorithm \ref{alg}. 

\begin{algorithm-hbox}[!ht]
\caption{Random greedy $r$-coloring}\label{alg}
\ForEach{$v\in V$}{
 	\textup{choose uniformly at random a point $t(v)$ from interval $[0,1]$}
}
\textbf{let} $(v_1, \ldots, v_m)$ \textbf{be} $V$ ordered according to $t(v)$
(i.e. $t(v_i) \leq t(v_{i+1})$)  \;
\For{$i=1\ldots m$}{
	\uIf{$\exists_{j\in\{1,\ldots,r\}}$ such that coloring $v_i$ with $j$ does not create a 
	monochromatic edge with the last vertex $v_i$}{
		$c(v_i) \gets$ smallest such $j$
	}
	\uElse{
			$c(v_i) \gets r$
	}
}
\Return c
\end{algorithm-hbox}

Random value $t(v)$ assigned by the algorithm to a vertex $v$ will be called a 
\emph{birth time} of $v$. We assume that the birth time assignment function sampled by the algorithm is 
injective (this happens with probability 1). For any edge $f$, the \emph{first} (resp. \emph{last}) 
vertex of $f$ is the vertex $v\in f$ with smallest (largest) birth time.

\section{Property B}
\begin{proof}[Proof of Theorem \ref{thm:RS}]
Let $(V,E)$ be an $n$-uniform hypergraph with $k 2^{n-1}$ edges. Let us consider  random greedy 2-coloring algorithm 
\ref{alg}. Following a long tradition, we call colors $1,2$ respectively \emph{blue} and \emph{red}. 
Then the rule of assigning colors used by algorithm reduces to \emph{choose color blue unless the currently colored 
vertex is the last vertex of a blue edge}.
Every pair of edges $(e,f)$ such that the last vertex of $e$ is the first vertex of $f$ will be called a 
\emph{conflicting pair}.

Clearly there can not be monochromatic \Blue~ edges in the coloring constructed by the algorithm. 
Suppose that some edge $f$ is colored \Red, and let  $v$ be the first vertex of $f$. Vertex $v$ has been 
colored \Red~ by the algorithm, so there exists an edge $e$, such that $v$ is the last vertex of $e$. 
Edges $(e,f)$ form a conflicting pair.
By the above discussion if there are no conflicting pairs under the assignment $t$, 
then the coloring produced by the algorithm is proper. We are going to check for which values of $k$ 
the probability of having no conflicting pairs is positive.

Let us divide real interval $[0,1]$ into three subintervals  $B=\left[0,\frac{1-p}{2}\right), P=\left[ \frac{1-p}{2}, 
\frac{1+p}{2}\right),  R=\left[\frac{1+p}{2},1\right]$
(with  parameter $p$ to be optimized later). We consider three events:
\begin{description}
	\item[\evB] there exists a conflicting pair with  common vertex in $B$,
	\item[\evP] there exists a conflicting pair with  common vertex in $P$,
	\item[\evR] there exists a conflicting pair with  common vertex in $R$.
\end{description}
Clearly $\Pr[\evB]=\Pr[\evR]$ and they are both smaller than the probability that there exists an edge such 
that all its vertices have birth times from interval $B$. The expected number of such edges is 
$k 2^{n-1} (\frac{1-p}{2})^n$, hence:
\begin{equation}
\label{eq:BR}
	\Pr[\evB \cup \evR] \leq \Pr[\evB]+\Pr[\evR] = 2 \Pr[\evB] \leq k 2^n \left(\frac{1-p}{2}\right)^n = k (1-p)^n.
\end{equation}

A pair of edges $(e,f)$ with exactly one common vertex is called \emph{dangerous}. Only dangerous pairs can be conflicting. 
The probability that there exists a conflicting pair with common vertex in $P$ is bounded from above by the 
expected number of such pairs:
\begin{align*}
	\Pr[\evP] &\leq \expt[\sharp \text{ conflicting pairs with common vertex in P}]
\\
	&\leq (k 2^{n-1})^2 \Pr[\text{dangerous pair is conflicting with common vertex in P}]
\\
	 &\leq
	(k 2^{n-1})^2\int_{\frac{1-p}{2}}^{\frac{1+p}{2}} x^{n-1} (1-x)^{n-1}\mathrm{d}x
	\\
	 & =
	k^2 \int_{-\frac{p}{2}}^{\frac{p}{2}}  \left((1+2x)(1-2x)\right)^{n-1} \mathrm{d}x
\end{align*}
The integrand function is smaller than 1, so the length of the integration interval is an upper bound for the value 
of the integral. We get
\begin{equation}
\label{eq:P}
	\Pr[\evP] \leq k^2 p.
\end{equation}
Inequalities (\ref{eq:BR}) and (\ref{eq:P}) give:
\[
	\Pr[\evB \cup \evR \cup \evP] \leq \Pr[\evB \cup \evR] + \Pr[\evP] \leq k (1-p)^n + k^2 p.
\]
Hence, whenever the following inequality holds
\begin{equation}
\label{ineq:RS}
	k (1-p)^n + k^2 p < 1,
\end{equation}
the algorithm produces a proper coloring with positive probability. 

Let $k_n=c \sqrt{n/\ln(n)}$ and $p_n=\ln(n/k_n)/n$. Then
\[
	\lim_{n\to \infty}\left(k_n (1-p_n)^n + k_n^2 p_n\right) = c^2/2.
\]
Therefore for any $c< \sqrt{2}$ and all sufficiently large $n$, any $n$-uniform hypergraph with at most 
$c \sqrt{n/\ln(n)} 2^{n-1}$ edges has property B.
\end{proof}
\section{$r$- coloring}
\label{sec:rcol}
\begin{proof}[Proof of Theorem \ref{thm:rCol}]
Let $(V,E)$ be an $n$-uniform graph with $k \;r^{n-2}$ edges.
We analyse the probability that random greedy $r$-coloring procedure produces a proper coloring. 
Analogously to the work of Pluh\'ar \cite{Pluhar09} we focus on avoiding specific structures called conflicting $r$-chains.

A sequence of edges $(f_1, \ldots , f_r)$ is called an \emph{$r$-chain} if $|f_i \cap f_{i+1}|=1$ for $i=1, \ldots, r-1$, 
and $f_i \cap f_j =\emptyset$ for all $i,j\in \{1,\ldots r\}$ such that $|i-j|>1$. An $r$-chain is \emph{conflicting} 
under birth time assignment $t$ if for $i=1,\ldots, r-1$ the last vertex of $f_i$ is the first vertex of $f_{i+1}$. 
It is easy to check that all monochromatic edges in the coloring constructed by the algorithm have color $r$ and 
every such edge is the last edge of some conflicting $r$-chain. Therefore, if there are no conflicting 
$r$-chains (under assignment $t$), then the coloring produced by the algorithm is proper. 

The \emph{length} of an edge $f\in E$ (under the assignment $t$) is the minimum length of an interval containing all 
the birth times of the vertices of $f$. Let $p=\frac{2 \ln(n)}{n}$. The edge is called \emph{short} if its 
length is smaller than $\frac{1-p}{r}$. The expected number of short edges is less than 
\begin{equation}
\label{eq:shortEd}
	k \;r^{n-2} \; n \left(\frac{1-p}{r}\right)^{n-1} \sim \frac{k}{r \;n}.
\end{equation}

Next we estimate the probability of conflicting $r$-chains in which no edge is short. Let $F=(f_1, \ldots, f_r)$ be an 
$r$-chain, and $x_1, \ldots, x_{r-1}$ be vertices such that $f_i \cap f_{i+1} =\{x_i\}$. Observe that for $F$ to 
be conflicting without short edges, the birth time of each $x_i$ must belong to interval 
$[\frac{i-i p}{r}, \frac{i+(r-i)p}{r}]$ (otherwise the average length of edges to the left or to the right would 
be smaller than $\frac{1-p}{r}$). The probability that vertices $x_1, \ldots, x_{r-1}$ have birth times in 
corresponding intervals is $p^{r-1}$. Once those birth times are fixed, the probability that remaining vertices of 
the chain falls into appropriate intervals is  smaller than (for convenience we put $t(x_0)=0, t(x_r)=1$)
\[
 \prod_{i=0}^{r-1} (t(x_{i+1}) -t(x_i))^{n-2}.
\]
Since the sum of differences in the product is 1, the product is maximized when 
$t(x_{i+1}) -t(x_i)=1/r$ for all $i=0, \ldots, r-1$. Hence the probability that an $r$-chain is 
conflicting is less than $p^{r-1} r^{-r(n-2)}$. As a consequence the expected number of conflicting 
$r$-chains without short edges is less than
\begin{equation}
\label{eq:rchain}
	\frac{2}{r!} (k\; r^{n-2})^r p^{r-1} r^{-r(n-2)} = \frac{2}{r!}  k^r \left(\frac{\ln(n)}{n}\right)^{r-1}.
\end{equation}
For $k<(\frac{n}{\ln(n)})^{\frac{r-1}{r}}$, that number is smaller than $\frac{2}{r!}$. 
Moreover for such $k$, if $n$ is large enough, then the expected number of short edges (\ref{eq:shortEd}) 
is close to zero. In those cases  with positive probability the algorithm produces proper 
$r$-coloring and the theorem follows.
\end{proof}

\begin{corollary}
\label{cor:btaC}
	If there exists a birth time assignment which makes no edge short and creates no conflicting $r$-chains, 
	then random greedy $r$-coloring algorithm produces proper coloring with positive probability 
	(at least the probability of sampling such birth time assignment).
\end{corollary}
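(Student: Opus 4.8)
The plan is to recognize that Corollary \ref{cor:btaC} only repackages facts already contained in the proof of Theorem \ref{thm:rCol}, trading the first-moment argument there for the hypothesis that a single good birth time assignment exists. The first thing I would record explicitly is a structural remark that the proof of Theorem \ref{thm:rCol} uses implicitly: conditioned on the sampled assignment $t$ being injective (probability $1$), the coloring returned by Algorithm \ref{alg} is a \emph{deterministic} function of $t$, and in fact depends only on the linear order that $t$ induces on $V$, because the colouring rule in the main loop only ever consults the relative order of birth times, never their numerical values. Together with the observation recalled in the proof of Theorem \ref{thm:rCol} — every monochromatic edge produced by the algorithm is the last edge of some conflicting $r$-chain — this yields the deterministic implication: \emph{if $t$ is injective and admits no conflicting $r$-chain, then running Algorithm \ref{alg} with this $t$ outputs a proper colouring}.

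Next I would observe that the predicate ``there is no conflicting $r$-chain under $t$'' is itself a property of the linear order induced by $t$ alone: an $r$-chain is a purely combinatorial configuration of edges, and the conflicting condition ``the last vertex of $f_i$ is the first vertex of $f_{i+1}$'' is decided by that order. Now take an assignment $t^{*}$ as in the hypothesis — only its conflict-freeness is needed, the ``no short edge'' clause plays no role here — and let $\pi^{*}$ be the order it induces. Then $\pi^{*}$ is conflict-free, hence every $t$ inducing $\pi^{*}$ is conflict-free, hence by the previous step the algorithm outputs a proper colouring for every such $t$. The set of $t\in[0,1]^{V}$ that induce a fixed linear order is a non-empty relatively open set (an open simplex, up to a null boundary), so it has positive Lebesgue measure; therefore the algorithm succeeds with positive probability.

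For the sharper ``at least the probability of sampling such a birth time assignment'' clause, I would simply note the inclusion of events: any $t$ that makes no edge short and creates no conflicting $r$-chain in particular creates no conflicting $r$-chain, so $\{\,t : \text{no short edge and no conflicting } r\text{-chain}\,\}$ is contained in $\{\,t : \text{Algorithm \ref{alg} outputs a proper colouring}\,\}$, and taking probabilities gives the claimed bound.

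I do not expect a genuine obstacle here — the whole content is a mild measure-theoretic unpacking of ``the output is a function of $t$''. The one point deserving care is to state precisely that conflict-freeness is an \emph{order} property, so that although the singleton $\{t^{*}\}$ is Lebesgue-null, conflict-freeness persists on a positive-measure neighbourhood of it; and one should be careful to invoke the algorithm's tie-breaking and injectivity conventions so that ``the order induced by $t$'', and hence the algorithm's output, is well defined almost surely.
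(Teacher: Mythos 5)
Your proposal is correct and matches the paper's (implicit) reasoning: the corollary is stated without a separate proof precisely because it follows from the observation in Section \ref{sec:rcol} that any injective assignment with no conflicting $r$-chain deterministically yields a proper coloring, together with the event inclusion that gives the "at least the probability of sampling such an assignment" bound. Your additional point — that conflict-freeness depends only on the induced order, so a single good assignment guarantees a positive-measure set of successful assignments — is a correct and careful elaboration of why "positive probability" holds, not a departure from the paper's argument.
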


\newcommand{\sh}{\mathcal{S}}
\newcommand{\cc}{\mathcal{C}}
\section{Local version}
\begin{proof}[Proof of Theorem \ref{thm:loc}]
Let $H=(V,E)$ be an $n$-uniform hypergraph with maximum edge degree $D-1$.
 To derive sufficient condition for $H$ to be $r$-colorable we apply Lov\'{a}sz Local Lemma to prove that there 
 exits a birth time assignment avoiding short edges and conflicting $r$-chains. Then by Corollary \ref{cor:btaC} 
 random greedy $r$-coloring algorithm produces proper coloring with positive probability. For a  birth assignment 
 function $t$ chosen uniformly at random (as in algorithm \ref{alg}) let us consider following events
\begin{enumerate}
	\item let $\sh_f$  be the event that edge $f$ is short,
	\item let $\cc_s$ be the event that an $r$-chain $s$ is conflicting.
\end{enumerate}
The particular values of $P_1=\Pr(\sh_f)$ and $P_2=\Pr(\cc_f)$ were analysed in Section \ref{sec:rcol}. 
Clearly every event $\sh_f$ is independent of all events $\sh_e$ and $\cc_s$ for $e$ and $s$ disjoint from $f$ 
(analogously for events $\cc_s$). 
Every edge intersects at most $D$ other edges and $r D^r$   different $r$-chains. 
Similarly  $r$-chain intersects at most $rD$ edges and $r^2 D^r$ other $r$-chains. Therefore it is sufficient to 
exhibit $x,y \in [0,1)$ for which:
\[
	P_1 \leq x (1-x)^D (1-y)^{r D^r}\;\;\;\text{   and   }\;\;\; P_2 \leq y (1-x)^{rD} (1-y)^{r^2 D^r}
\]
to conclude from Lov\'{a}sz Local Lemma that there exists a birth assignment function which avoids short edges and 
conflicting $r$-chains. 
Choosing $x=1-e^{-a/D}$ and $y=1-e^{-b/(rD^r)}$ the right hand sides of the inequalities become 
$x e^{-(a+b)}$ and $y e^{-r(a+b)}$. A tedious and standard calculations, that we omit here, show that it is possible 
to choose positive $a,b,c$ so that the inequalities are satisfied for all large enough $n$ and 
$D< c (\frac{n}{\ln(n)})^{(r-1)/r} r^n$.


\end{proof}

\section{Remarks}
\begin{enumerate}
	\item Inequality (\ref{ineq:RS}) is exactly the inequality optimized in \cite{RS00}.
	\item The optimal value for $p$ in the case of 2-coloring have the following combinatorial interpretation.  
	Suppose that the birth time of the last vertex of an edge $e$ is $\frac{1-p}{2}$. Then conditional expected number 
	of conflicting pairs $(e,f)$ is at most $k 2^{n-1} n^{-1} (\frac{1+p}{2})^{n-1}$, which  tends to 1 with $n$, for 
	chosen $k$ and $p$.
	\item The birth times of vertices are used in algorithms only to generate an ordering of $V$, so the same result 
	applies to algorithms which instead choose uniformly at random a permutation of $V$. 
	\item Careful analysis of the algorithm which chooses random permutation can give essentially better bound when 
	the number of vertices is sufficiently small. 
In particular, considered algorithms are never worse than choosing equitable partition of vertices into color classes. 
As observed in \cite{RadhStream} for $|V| = O(n^2/\log(n))$ the last strategy with positive probability construct proper 
two coloring of hypergraphs with $\theta(n 2^n)$ edges.
	\item The presented analysis of random greedy algorithm shows that within some intervals the ordering of vertices 
	is irrelevant (e.g. in intervals $B,R$ for 2-coloring). It suggests an equivalent variant of the algorithm which 
	first chooses vertices which fall into these intervals, color these vertices accordingly, and then use random greedy 
	coloring for the remaining ones. Those two phases can be considered as precoloring and random alteration. 
	For 2-coloring  it closely resembles the algorithm of Radhakrishnan and Srinivasan from \cite{RS00} (especially 
	the simplification by Boppana mentioned in the paper).
	

	\item Random greedy coloring algorithm easily translates to a streaming framework analysed in \cite{RadhStream}.
\end{enumerate}

\vspace{0.5cm}

\textbf{Acknowledgments.}
The present paper is a result of combining two independent papers by the authors each one containing Theorem \ref{thm:rCol} as the main result.
The authors are grateful to N. Alon, A.M. Raigorodskii, A. Ruci\'nski, D.A. Shabanov,  and J. Spencer for discussions and valuable suggestions. 

\bibliographystyle{siam}
\bibliography{pB}

%
%
%
%
%
\end{document}